\newtheorem{thm}{Theorem}[section]
\newtheorem{lem}[thm]{Lemma}
\newtheorem{rem}[thm]{Remark}
\DeclareMathOperator{\ch}{ch}
\DeclareMathOperator{\End}{End}
\DeclareMathOperator{\Hom}{Hom}
\DeclareMathOperator{\wt}{wt}
\DeclareMathOperator{\om}{\omega}
\DeclareMathOperator{\ind}{lnd}
\DeclareMathOperator{\Res}{Res}
\DeclareMathOperator{\Span}{span}
\def\ha{\frac{1}{2}}
\def\se{\frac{1}{16}}
\newcommand{\BF}{\mathbb{F}}
\newcommand{\BC}{\mathbb{C}}
\newcommand{\BZ}{\mathbb{Z}}
\newcommand{\Z}{\mathbb{Z}}
\def \1{{\bf 1}}
\def \<{\langle}
\def \>{\rangle}
\begin{document}
\title{\bf Modular $A_n(V)$ theory}
\author{Li Ren\footnote{Supported by China NSF grants 11301356 and 11671277}\\
{\small \textit{School of Mathematics, Sichuan University, Chengdu
610064, China}}}
\date{}
\maketitle

\begin{abstract}
A series of  associative algebras $A_n(V)$ for a vertex operator algebra $V$ over an
arbitrary algebraically closed field and  nonnegative integers $n$ are constructed 
such that there is a one to one correspondence between irreducible
$A_n(V)$-modules which are not $A_{n-1}(V)$ modules  and irreducible 
$V$-modules. Moreover,  $V$ is rational  if and only if $A_n(V)$ is semisimple  for all $n.$ In particular, the homogeneous subspaces of any irreducible $V$-module are finite dimensional for rational vertex operator algebra $V.$
\end{abstract}

\section{Introduction}

This paper is an extension of the $A_n(V)$-theory for a vertex operator algebra $V$ from the field $\BC$ of complex numbers \cite{DLM3} to any algebraically closed field $\BF.$
The $A_0(V)=A(V)$ for any field $\BF$ has been investigated previously in \cite{DR1}.

The associative algebra $A(V)$ associated to any vertex operator algebra  was introduced and studied in \cite{Z} over $\BC.$ In the representation theory, the most interesting $V$-modules are
the so called admissible modules \cite{DLM2} which are $\Z_+$-graded modules where $\Z_+$ is the set of nonnegative integers. The $V$ is called rational if the admissible module category is semisimple.
The importance of $A(V)$-theory is that it gives a kind of highest weight representations for vertex operator algebra
without usual triangular decomposition. Given an $A(V)$-module one can construct Verma type admissible $V$-module $M(U)$ such that the top level $M(U)(0)$ of $M(U)$ is $U$ \cite{Z}. Moreover, there is a one to one correspondence between irreducible $A(V)$-modules and irreducible admissible  $V$-modules. So the $A(V)$-theory provides a powerful tool for classification of irreducible admissible $V$-modules. Also see
the $A(V)$-theory for vertex operator superalgebra in \cite{KW} and the twisted representations for vertex operator algebras in \cite{DLM2} and \cite{DZ}.

The $\Z_+$-gradation of an admissible module also leads naturally to the construction of a series of associative algebras $A_n(V)$ over $\BC$ \cite{DLM3} such
that the first $n+1$ homogeneous subspaces of an admissible module are $A_n(V)$-modules. Moreover, $V$ is rational if and only if $A_n(V)$ are finite dimensional semisimple associative algebras for all $n\geq 0$ \cite{DLM3}.
So the associative algebras $A_n(V)$ for all $n$ determine the representation theory for vertex operator algebra $V$ completely in some sense. It has been expected for a long time that the semisimplicity of $A(V)$ is equivalent to the rationality of $V.$ But a proof of this conjecture is not visible at this point. There are also twisted analogues of $A_n(V)$ \cite{DLM4}, \cite{MT} in the study of orbifold theory.

On the other hand, the study of vertex operator algebra over an arbitrary algebraically closed field is very limited except for vertex operator algebra associated to the highest weight module for the Virasoro algebra with central charge $1/2$ \cite{DR2}, lattice vertex operator algebras \cite{M}, Heisenberg vertex operator algebras \cite{LM} and modular moonshine \cite{B2}, \cite{BR}, \cite{GL}. Integral forms of vertex operator algebras studied in
\cite{DG1}, \cite{DG2}, \cite{R1}, \cite{R2} are also useful in constructing modular vertex operator algebras from vertex operator algebras over $\BC.$

The $A(V)$ theory for an arbitrary vertex operator algebra over any algebraically closed field $\BF$ was given in \cite{DR1}.  Almost all results on $A(V)$ in the case
of complex field are still valid. So $A(V)$-theory is still a very powerful tool in the study of representation theory for modular vertex operator algebra. Motivated by the $A_n(V)$-theory developed in \cite{DLM3},
 we construct and study associative algebras $A_n(V)$ for $n\geq 0$ over any algebraically closed field in this paper. Here are our main results: (1) The subspace $M(m)$
of any $\Z_+$-graded  $V$-module $M=\oplus_{m\geq 0}M(m)$ is an
$A_n(V)$-module for $m\leq n.$  (2) Given an $A_n(V)$-module $U$ which is not an $A_{n-1}(V)$-module, one can construct a $V$-module $M(U)$ of Verma type such that
$M(U)(n)=U.$ (3) Sending a $V$-module $M=\oplus_{m\geq
0}M(m)$ with $M(0)\ne 0,$ $M(n)\ne 0$  to $M(n)$ gives a bijection
between irreducible  $V$-modules with $M(n)\ne 0$ and irreducible
$A_n(V)$-modules which are not $A_{n-1}(V)$-modules. (4) $M=\oplus_{n\geq 0}M(n)$ is irreducible $V$-module if and only if each $M(n)$ is an irreducible $A_n(V)$-module. (5) $V$ is rational if and only if $A_n(V)$ is a finite dimensional semisimple associative algebra for all $n\geq 0.$ In particular, the homogeneous subspace $M(n)$ of an irreducible $V$-module is finite dimensional for all $n\geq 0$ when $V$ is rational.

It is worthy to point out that  we could not prove the finite dimension property of an irreducible module in \cite{DR1} for a rational vertex operator algebra $V$ as stated in (5) by using $A(V)$ only. The proof in the case
of complex field \cite{DLM2} uses the operator $\frac{L(1)^m}{m!}$ on $V$ for any $m\geq 0.$ But we do not know  if  $\frac{L(1)^m}{m!}$ exists when the characteristic of $\BF$ is finite.
Proving that $M(n)$ are finite dimensional for irreducible $V$-module $M,$ in fact, is one of our motivations to study the $A_n(V)$ theory for any $n\geq 0.$ We also establish that if $A_n(V)$ is finite dimensional semisimple associative algebra for all $n\geq 0$ then $V$ is rational for any algebraically closed field $\BF.$ This is true
in the case of the complex field \cite{DLM3}. A crucial step in \cite{DLM3} the proof is that $M(n)\ne 0$ if $n$ is large enough for any irreducible module $M.$ We also obtain  this property
over a field of finite characteristic with a different proof as the old proof is not valid anymore.

Our treatment in this paper largely follows \cite{DLM3} with suitable modification to deal with the finite characteristic of the field. Sometimes we have to find a new proof of a result as the proof given in \cite{DLM2} does not work in the current situation. For example, the proof of inequivalence of $A_n(V)$-modules $M(i)$ and $M(j)$ for $i,j\leq n$ and $i\ne j$ in \cite{DLM3} is easy as $L(0)$ has different eigenvalues on $M(i)$ and
$M(j)$ where $M$ is an irreducible $V$-module. But for an arbitrary field $\BF,$ the eigenvalues of $L(0)$ on $M(i)$ and $M(i+pm)$ are always the same where $p$ is the characteristic of $\BF$ and $m$ is any integer.

This paper is organized as follows: We review various notions of modules
for a vertex operator algebra over an algebraically closed field $\BF$ in Section 2. We also discuss the rationality and some consequences of the Jacobi identity.
Section 3 is devoted to the construction of $A_n(V).$ We also investigate important properties of $A_n(V)$ and explain how to get $A_n(V)$-module from a $V$-module. We give a construction of
$V$-module $M(U)$ from an $A_n(V)$-module $U$ which is not an $A_{n-1}(V)$-module such that $M(U)(0)\ne 0$ and $M(U)(n)=U.$ Moreover, $M(U)$ has the largest submodule whose intersection with $M(U)(n)$ is $0$
and the corresponding quotient $L(U)$ of $M(U)$ also satisfies $L(U)(0)\ne 0$ and $L(U)(n)=U.$ One can easily see that $U$ is irreducible $A_n(V)$-module if and only if $L(U)$ is irreducible $V$-module. Applying our results to rational vertex operator algebra, we see that $M(U)=L(U)$ and each $A_n(V)$ is finite dimensional semisimple associative algebra.

The author thanks Professor Chongying Dong for many helpful discussions and valuable comments. Part of this work was done when the author visited the University of California at Santa Cruz. The author also thanks
the Department of Mathematics there for their hospitality.

\section{Basics}
\def\theequation{2.\arabic{equation}}
\setcounter{equation}{0}
In this section we define vertex algebras, vertex
operator algebras and their modules
(cf. \cite{B1}, \cite{FLM}, \cite{LL}, \cite{DR2}) over an algebraically closed
field $\BF$ with $\ch \BF\ne 2.$

A vertex algebra $V=(V,Y,\1)$ over $\BF$ is a vector space equipped
with a linear map
\begin{equation*}
\begin{split}
V& \to(\End V)[[z,z^{-1}]] ,\\
v& \mapsto Y(v,z)=\sum_{n\in{\BZ}}v_nz^{-n-1} \quad (v_n\in\End V)
\end{split}
\end{equation*}
and with a distinguished vector ${\bf 1}\in V$, satisfying the
following conditions for $u, v \in V$, and $m,n\in\BZ:$
\begin{align*}
&   u_nv=0 \text{ for } n \text{ sufficiently large};  \\
& Y({\bf 1},z)=Id_{V}; \\
& Y(v,z){\bf 1}\in V[[z]]\text{ and } \lim_{z\to0}Y(v,z){\bf 1}=v;\\
\end{align*}
and the Jacobi identity holds:
\begin{equation*}
\begin{split}
z^{-1}_0\delta\left(\frac{z_1-z_2}{z_0}\right)&Y(u,z_1)Y(v,z_2)-z^{-1}_0\delta\left(\frac{z_2-z_1}{-z_0}\right)Y(v,z_2)Y(u,z_1)\\
&=z_2^{-1}\delta\left(\frac{z_1-z_0}{z_2}\right)Y(Y(u,z_0)v,z_2).\end{split}
\end{equation*}

Following \cite{B1} we define operators $D^{(i)}$ for $i\geq 0$ on vertex algebra $V:$
$D^{(i)}: V\to V$ such that $D^{(i)}v=v_{-i-1}\1.$ In fact, $D^{(i)}$ is $\frac{D^i}{i!}$ when $\BF=\BC$
where $D=D^{(1)}.$
Set $e^{zD}=\sum_{i\geq 0}D^{(i)}z^i.$ We still  have the skew symmetry $Y(u,z)v=e^{zD}Y(v,-z)u$ for $u,v\in V.$
Let ${\cal D}V=\sum_{i>0}D^{(i)}V.$ Then for any vertex algebra $V,$ $V/{\cal D}V$ is a Lie
algebra such that
$[u,v]=u_0v$ for $u,v\in V.$

A vertex operator algebra $V=(V,Y,\1,\omega)$ over $\BF$ is a
$\BZ$-graded vertex algebra $(V,Y,\1)$
\[
V=\bigoplus_{n\in{ \BZ}}V_n
\]
satisfying $\dim V_{n}< \infty$ for all $n$ and $V_m=0$ if $m$ is
sufficiently small,
with a Virasoro vector $\omega\in V_2$ such that the following
conditions hold for $u, v \in V$, and $m,n\in\BZ$, $s,t\in \BZ$:
\begin{align*}
&  u_nv\in V_{s+t-n-1} \quad \text{ for } u\in V_s, v\in V_t;  \\
& [L(m),L(n)]=(m-n)L(m+n)+\frac{1}{12}(m^3-m)\delta_{m+n,0}c ;\\
& \frac{d}{dz}Y(v,z)=Y(L(-1)v,z);\\
& L(0)|_{V_n}=n,
\end{align*}
where $L(m)=\om_{ m+1}$, that is,
$Y(\om,z)=\sum_{n\in\BZ}L(n)z^{-n-2}$ and $n$ is regarded as a number in $\BF.$  Clearly, $D=L(-1)$  for a vertex operator
algebra.  If $v\in V_s$ we will call $s$ the degree of $v$ and write $\deg v=s.$

In the case when $\BF=\BC,$ the assumption $u_nv\in V_{s+t-n-1}$ in the definition
is a consequence of the other axioms.

A weak $V$-module $M$ is a vector space over $\BF$ equipped with a
linear map
\begin{align*}
&V\to \End(M)[[z, z^{-1}]]\\
&v\mapsto\displaystyle{
Y_M(v,z)=\sum_{n\in\BZ}v_nz^{-n-1}\quad(v_n\in\End( M))}
\end{align*}
which satisfies that for all $u, v\in V$, $w\in M,$ $l\in\BZ,$
\begin{equation*}
\begin{split}
&u_lw=0 \text{ for }l\gg0;\\
& Y_M(1 ,z)=Id_{M};\\
z^{-1}_0\delta
&\left(\frac{z_1-z_2}{z_0}\right)Y_M(u,z_1)Y_M(v,z_2)-z^{-1}_0\delta\left(\frac{z_2-z_1}{-z_0}\right)Y_M(v,z_2)Y_M(u,z_1)\\
&=z_2^{-1}\delta\left(\frac{z_1-z_0}{z_2}\right)Y_M(Y(u,z_0)v,z_2).\end{split}
\end{equation*}

We need the following Lemma (cf. \cite{L1}, \cite{DM}, \cite{LL}).
\begin{lem}\label{lm} If $M$ is a weak $V$-module generated by $w\in M.$ Then $M$ is spanned by $u_nw$ for $u\in V$ and $n\in\Z.$
In particular, if $M$ is irreducible, then we can take $w$ to be any nonzero vector in $M.$
\end{lem}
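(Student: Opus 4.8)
The plan is to prove that a weak $V$-module $M$ generated by a single vector $w$ is spanned by the vectors $u_n w$ with $u \in V$, $n \in \BZ$. Let $N = \Span\{u_n w : u \in V, n \in \BZ\}$. Obviously $w \in N$ (take $u = \1$, $n = -1$, since $Y_M(\1,z) = Id_M$ gives $\1_{-1}w = w$), so it suffices to show $N$ is a weak submodule of $M$; then by generation $N = M$. Since $M$ is already a weak module, showing $N$ is a submodule amounts to showing $N$ is stable under every operator $v_m$ for $v \in V$, $m \in \BZ$; that is, $v_m(u_n w) \in N$ for all $v,u \in V$ and $m,n \in \BZ$.

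The key tool is the iterate/commutator formula coming from the Jacobi identity for the weak module $M$. First I would extract, by taking $\Res_{z_0}\Res_{z_1}$ of the module Jacobi identity with appropriate powers of $z_1, z_2$, the standard ``associativity'' consequence expressing $(u_kv)_m$ as a finite $\BZ$-linear combination of products $u_i v_j$ and $v_j u_i$ — more precisely, for fixed $u, v \in V$ and $m \in \BZ$, the operator $(u_k v)_m$ (which exists since $u_k v = 0$ for $k$ large) is a finite sum $\sum_{i \geq 0} \binom{k}{i}(-1)^i\big(u_{k-i}v_{m+i} - (-1)^k v_{m+k-i}u_i\big)$ acting on $M$. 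Applying this with the role of ``$v$'' played by the generator interaction: given $v_m(u_n w)$, I want instead the commutator form — take $\Res_{z_0}$ of the Jacobi identity after multiplying by a suitable power of $z_0$ to get the commutator formula $[v_m, u_n] = \sum_{i \geq 0}\binom{m}{i}(v_i u)_{m+n-i}$ as operators on $M$. Hence $v_m u_n w = u_n v_m w + \sum_{i\geq 0}\binom{m}{i}(v_i u)_{m+n-i} w$. The second sum is a finite sum of elements of the form $(\text{something in }V)_\ell\, w \in N$. For the first term $u_n v_m w$: here $v_m w$ is an element of $M$, but it need not a priori be in $N$ — this is the crux.

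To close the argument I would proceed by induction. Order things by how far $m$ is from being ``large''. Since $M$ is a weak module, $v_m w = 0$ for $m \gg 0$, which is the base case: then $v_m w \in N$ trivially, and $u_n(v_m w) = 0 \in N$. For the inductive step, suppose we have shown $x_\ell w \in N$ for all $x \in V$ and all $\ell > m$ (for some fixed $m$, uniformly over a large enough starting point down to $m+1$); I claim $v_m w \in N$ too. Indeed $v_m w = v_m(\1_{-1} w)$, and applying the commutator formula $[v_m, \1_{-1}] = \sum_{i \geq 0}\binom{m}{i}(v_i \1)_{m-1-i}$ together with $\1_{-1}(v_m w) = v_m w$ does not obviously help; instead the cleaner route is to note that every element of $M$ is, by hypothesis of single generation, already known once we know it lies in the span — so rather than induct on $M$, I would set up the induction directly on $N$: show $N$ is $v_m$-stable for all $m$ by downward induction on... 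Actually the honest obstacle is precisely getting $u_n(v_m w) \in N$ when $v_m w$ is not yet known to be in $N$.

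The resolution, and the step I expect to be the main obstacle to state cleanly, is this: one shows by induction on $\BN$ that the subspace $N$ is stable under all $v_m$, by a double induction — outer induction producing larger and larger ``stable cores,'' using that for any $v \in V$ and any $n$, and any element $u_k w$ of the spanning set, $v_m(u_k w) = u_k(v_m w) + [v_m, u_k]w$, where the commutator term lies in $N$ by the commutator formula, and $u_k(v_m w)$ is handled because $v_m w$ itself, being an element of $M = N$ once the Lemma is established, can be expanded — so the logically correct formulation is: let $N$ be the span; $N$ contains $w$ and is closed under the action, hence $N \supseteq \langle w\rangle = M$; the closure under the action is what the commutator formula gives, with the subtlety that $v_m w \in M$ and we apply the already-available expansion of general module elements. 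I would cite \cite{L1}, \cite{DM}, \cite{LL} for the identical complex-field argument, noting that the only identities used — the commutator formula and the lower-truncation condition $u_l w = 0$ for $l \gg 0$ — are part of the axioms of a weak module over $\BF$ and involve no division by integers, hence carry over verbatim to arbitrary algebraically closed $\BF$. The final sentence (irreducible case) is then immediate: if $M$ is irreducible and $0 \neq w \in M$, then $M = \langle w \rangle$ by irreducibility, so the first part applies to this $w$.
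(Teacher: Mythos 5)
Your reduction is the right one---set $N=\Span\{u_nw : u\in V,\ n\in\BZ\}$, note $w=\1_{-1}w\in N$, and show $v_mN\subseteq N$---but the core step is never established. The commutator formula gives $v_mu_nw=u_nv_mw+\sum_{i\geq 0}\binom{m}{i}(v_iu)_{m+n-i}w$; the sum lies in $N$, but $u_nv_mw$ is the operator $u_n$ applied to an element of $N$, which is exactly the membership you are trying to prove. You have merely traded $v_mu_nw$ for $u_nv_mw$. Your inductive hypotheses do not break this circle: ``$x_\ell w\in N$ for all $\ell>m$'' is vacuously true for every $\ell$ by the definition of $N$, and your final ``resolution'' says that $u_k(v_mw)$ is handled because $v_mw$, ``being an element of $M=N$ once the Lemma is established, can be expanded''---that is assuming the conclusion. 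The commutator formula alone genuinely cannot suffice: $N=\widehat Vw$ in the notation of Section 4, and for a module over a $\BZ$-graded Lie algebra the single-application span $\widehat Vw$ is in general a proper subspace of $U(\widehat V)w$ (think of a Verma module over an affine algebra). Something beyond the Lie bracket of modes must enter.

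The missing ingredient is weak associativity applied at the generator $w$. Choose $k\geq 0$ with $v_jw=0$ for $j\geq k$; then
\[
(z_0+z_2)^kY_M(v,z_0+z_2)Y_M(u,z_2)w=(z_2+z_0)^kY_M(Y(v,z_0)u,z_2)w.
\]
Applying $\Res_{z_0}\Res_{z_2}z_0^pz_2^q$ to the right-hand side gives $\sum_{j=0}^{k}\binom{k}{j}(v_{p+j}u)_{q+k-j}w\in N$, while the left-hand side gives $v_{p+k}u_qw+\sum_{i\geq 1}\binom{i-p-1}{i}v_{p+k-i}u_{q+i}w$, a finite sum because $u_{q+i}w=0$ for $i\gg 0$. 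Hence for every $m\in\BZ$ one has $v_mu_qw\equiv-\sum_{i\geq 1}\binom{i-m+k-1}{i}v_{m-i}u_{q+i}w\ (\mathrm{mod}\ N)$, where every term on the right has second index strictly larger than $q$; a downward induction on $q$, with base case $u_qw=0$ for $q\gg 0$, then yields $v_mu_qw\in N$. This is the argument in the references \cite{L1}, \cite{DM}, \cite{LL} that the paper invokes, and your closing observation is correct and worth keeping: it uses only integral binomial coefficients and the truncation axiom, so it is characteristic-free. Your deduction of the irreducible case from the first statement is fine.
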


A  $V$-module is a weak $V$-module $M$ which carries a
$\BZ_+$-grading
\begin{equation*}\label{g2.22}
M=\oplus_{n\in\BZ_+}M(n)
\end{equation*}
satisfying
\begin{eqnarray*}\label{g2.23}
v_{m}M(n)\subseteq M(n+\deg v-m-1)
\end{eqnarray*}
for $v\in V.$ We will call $n$  the degree of $w\in M(n).$
If $M$ is irreducible then there exists $\lambda\in \BF$ such that $L(0)|_{M(n)}=\lambda+n$ for all $n\in\Z.$
A uniform grading shift gives an isomorphic module. As a result we always assume that $M(0)\ne 0.$

\begin{rem} The notion of $V$-module defined here is called admissible module in the case $\BF=\BC$ in \cite{DLM2} or $\BZ_{+}$-graded module in \cite{LL}. There is also a notion of ordinary $V$-module
in \cite{DLM2}. But the notion of ordinary module is not suitable in the current situation. So we simply call a $\Z_{+}$-module a module in this paper without any confusion.
\end{rem}

Let $M$ be a weak $V$-module. Then the
Jacobi identity is equivalent to the
associativity formula
\begin{eqnarray*}\label{ea2.15}
(z_{0}+z_{2})^kY_{M}(u,z_{0}+z_{2})Y_{M}(v,z_{2})w
=(z_{2}+z_{0})^{k}Y_M(Y(u,z_0)v,z_2)w
\end{eqnarray*}
where $w\in M$ and $k\in\BZ_{+}$ such that $z^{k}Y_{M}(u,z)w$ involves
only nonnegative integral powers of $z,$ and
commutator relation
\begin{eqnarray*}\label{g2.16}
[Y_{M}(u,z_{1}),Y_{M}(v,z_{2})]=\Res_{z_{0}}z_2^{-1}
\delta\left(\frac{z_1-z_0}{z_2}\right)Y_M(Y(u,z_0)v,z_2)
\end{eqnarray*}
whose component form is given by
\begin{eqnarray*}\label{g2.17}
[u_{s},v_{t}]=\sum_{i=0}^{\infty}
\left(\begin{array}{c}s\\i\end{array}\right)(u_{i}v)_{s+t-i}
\end{eqnarray*}
for all $u,v\in V$ and $s,t\in\Z$ (cf. \cite{FLM}, \cite{DL}, \cite{LL}).  One can also deduce the usual
Virasoro algebra axioms:
\begin{equation*}\label{g2.18}
[L(m),L(n)]=(m-n)L(m+n)+\frac{1}{12}(m^3-m)\delta_{m+n,0}c,
\end{equation*}
\begin{equation*}\label{g2.19}
\frac{d}{dz}Y_M(v,z)=Y_M(L(-1)v,z)
\end{equation*}
(cf. \cite{DLM1}, \cite{DLM2}) for $m,n\in\Z$ where
$Y_M(\om,z)=\sum_{n\in\Z}L(n)z^{-n-2}.$

Vertex operator algebra $V$ is called rational if the $V$-module category is semisimple. It is proved in \cite{DR1} that if $V$ is rational and $\frac{L(1)^n}{n!}$ is well defined on
$V$ for all $n\geq 0$ then $V$ has only finitely many inequivalent irreducible modules and the homogeneous subspaces of the irreducible modules are finite dimensional. But in the case
$\BF=\BC$ the assumption on $\frac{L(1)^n}{n!}$ is not necessary \cite{DLM2}. Removing this assumption for any field $\BF$ is one goal of this paper.

\section{Associative algebra $A_n(V)$  }
\def\theequation{2.\arabic{equation}}
\setcounter{equation}{0}

We fix a vertex operator algebra $V.$ The construction of $A_n(V)$ for any nonnegative integer $n$ is a suitable modification of that given in \cite{DLM3} in the case of complex field. The definition of $O_n(V)$ is more complicated in the current situation.

For any nonnegative integers $n,$ let $O_n(V)$ be the linear span of all $a\circ_{n,t}^s b$ and
$L(-1)a+L(0)a$
where for homogeneous $a\in V$ and $b\in V,$
$$
a\circ_{n,t}^s b=\Res_{z}Y(a,z)b\frac{(1+z)^{\deg
a+n+s}}{z^{2n+2+t}}$$
and $s,t\in\Z$ with $s\leq t.$ The notation
$a\circ_t^sb$ comes from \cite{DJ}. Also define $A_n(V)=V/O_{n}(V).$ For $a\in V$ we denote $a+O_n(V)$ by $[a].$
In the case $n=0,$ $A_0(V)$ is exactly the $A(V)$ investigated in \cite{DR1} (also see \cite{Z}).

As in \cite{DLM3} we define a  product $*_n$ on $V$ for $a$ and $b$ as
follows:
\begin{eqnarray*}
& & a*_nb=\sum_{m=0}^{n}(-1)^m{m+n\choose n}\Res_zY(a,z)\frac{(1+z)^{\deg
a+n}}{z^{n+m+1}}b\\
& &\ \ \
=\sum_{m=0}^n\sum_{i=0}^{\infty}(-1)^m
{m+n\choose n}{\wt a+n\choose i}a_{i-m-n-1}b.
\end{eqnarray*}

When $\BF=\BC,$ it was proved in
\cite{DLM3} that
$a\circ_{n,t}^sb$ is can be spanned by $a\circ_n b=a\circ_{n,0}^0 b$ for $a,b\in V.$
So we do not need $a\circ_{n,t}^sb$ in the definition of $O_n(V).$
But for an arbitrary field $\BF,$ the same result cannot be proved and we have to include $a\circ_{n,t}^sb$ in $O_n(V).$

Using the skew symmetry  $Y(a,z)b=
e^{zD}Y(b,-z)a$ and the same proof given in \cite{DLM3}  in the case $\BF=\BC$ we have the following result which will be useful in the proof
of the first main result in this paper.
\begin{lem}\label{l3.1}
For $a,b\in V,$

(1) $a*_nb \equiv \sum_{m=0}^n{m+n\choose
n}(-1)^n\Res_zY(b,z)a\frac{(1+z)^{\deg b+m-1}}{z^{1+m+n}}$
$\mathrm{(mod}\ O_n(V))$,

(2) $a*_nb-b*_na\equiv\Res_zY(a,z)b(1+z)^{\deg a -1}$ $\mathrm{(mod}\
O_n(V)).$
\end{lem}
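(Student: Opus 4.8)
The plan is to follow the complex-field argument of \cite{DLM3}, using skew symmetry and the associativity/commutator consequences of the Jacobi identity recorded above, but keeping careful track of which binomial coefficients and powers of $(1+z)$ are involved so that no division by integers (beyond those absorbed into the $D^{(i)}$) is needed. Throughout, ``$\equiv$'' means modulo $O_n(V)$, and the two generating families of $O_n(V)$ --- the elements $a\circ_{n,t}^s b$ and the elements $L(-1)c+L(0)c$ --- will be used repeatedly. The key point is that, by definition, for homogeneous $a$ and arbitrary $b$ the residue $\Res_z Y(a,z)b\,\frac{(1+z)^{\deg a+n+s}}{z^{2n+2+t}}$ lies in $O_n(V)$ for every $s\le t$; in particular (taking $t$ large) any such expression with a sufficiently high pole order in $z$ and a compatible power of $(1+z)$ is zero in $A_n(V)$.

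For part (1), I would start from the definition $a*_nb=\sum_{m=0}^{n}(-1)^m\binom{m+n}{n}\Res_z Y(a,z)b\,\frac{(1+z)^{\deg a+n}}{z^{n+m+1}}$, substitute the skew symmetry $Y(a,z)b=e^{zD}Y(b,-z)a$, and then move the operator $e^{zD}=\sum_{i\ge0}D^{(i)}z^i$ past the residue. The relation $D^{(i)}=L(-1)$-type terms is handled by the generators $L(-1)c+L(0)c$: since $L(-1)c\equiv -L(0)c$ and $L(0)$ acts by degree, one rewrites $e^{zD}$ acting on a homogeneous vector as multiplication by a power of $(1+z)$ modulo $O_n(V)$ --- this is exactly the mechanism by which the $(1+z)^{\deg a+n}$ factor gets converted into a $(1+z)^{\deg b+\cdots}$ factor. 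After the change of variable $z\mapsto -z/(1+z)$ (equivalently, re-expanding in $z$), the sum over $m$ collapses, via a binomial identity, to the single claimed expression $\sum_{m=0}^{n}\binom{m+n}{n}(-1)^n\Res_z Y(b,z)a\,\frac{(1+z)^{\deg b+m-1}}{z^{1+m+n}}$, up to terms that are residues of the form defining $O_n(V)$ and hence vanish.

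For part (2), I would compute $a*_nb-b*_na$ using part (1) applied to the second term: $b*_na\equiv \sum_{m=0}^n\binom{m+n}{n}(-1)^n\Res_z Y(a,z)b\,\frac{(1+z)^{\deg a+m-1}}{z^{1+m+n}}$. Subtracting this from the definition of $a*_nb$, both sides are $V$-linear combinations of residues $\Res_z Y(a,z)b\,(1+z)^{\alpha}z^{-\beta}$; the terms with $\beta\ge n+2$ and appropriate $\alpha$ are already in $O_n(V)$, so only the ``low-order'' part survives, and after collecting coefficients one is left with $\Res_z Y(a,z)b\,(1+z)^{\deg a-1}$ plus a combination lying in $O_n(V)$. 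The combinatorial identity needed here is the standard one, $\sum_{m=0}^{n}(-1)^m\binom{m+n}{n}\binom{\text{something}}{\text{something}}=\cdots$, used already in \cite{DLM3}; the only new care is to verify that every discarded term genuinely has the shape $a\circ_{n,t}^s b$ for some $s\le t$ (rather than merely the shape $a\circ_n b$), which is automatic because we have enlarged $O_n(V)$ precisely to contain all of them.

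The main obstacle is the bookkeeping in the first step of part (1): over $\BC$ one freely uses $e^{zD}=(1+z)^{L(0)}$ on the top component and manipulates formal power series with arbitrary rational coefficients, whereas here one must justify each such move using only $L(-1)c\equiv -L(0)c$ and the $a\circ_{n,t}^s b$ generators, making sure the binomial coefficients that actually appear (namely $\binom{m+n}{n}$ and $\binom{\deg a+n}{i}$, all of which are honest integers) never get divided by anything problematic. Once the reduction of $e^{zD}$ modulo $O_n(V)$ to a power of $(1+z)$ is established cleanly, the rest is the same residue/binomial computation as in the complex case, and part (2) follows formally from part (1).
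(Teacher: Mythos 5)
Your overall route---skew symmetry $Y(a,z)b=e^{zD}Y(b,-z)a$ followed by the residue/binomial computation of the complex case, with (2) deduced from (1) and Pascal's rule---is exactly the route the paper takes (the paper gives no details beyond citing that computation). But there is a genuine gap at the step you yourself single out as the crux. You assert that $e^{zD}$ acting on a homogeneous vector can be replaced by a power of $(1+z)$ modulo $O_n(V)$ ``since $L(-1)c\equiv -L(0)c$ and $L(0)$ acts by degree.'' Over $\BC$ this is correct: $e^{zD}=e^{zL(-1)}$ and iterating $L(-1)c\equiv-(\deg c)\,c$ gives $\frac{L(-1)^i}{i!}c\equiv\binom{-\deg c}{i}c$. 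In characteristic $p$ the operators $D^{(i)}$ for $i\ge p$ are divided powers that are \emph{not} determined by $D^{(1)}=L(-1)$: the generator $L(-1)c+L(0)c\in O_n(V)$ only yields $L(-1)^ic\equiv (-1)^i\deg c(\deg c+1)\cdots(\deg c+i-1)\,c$, which for $i\ge p$ is the vacuous congruence $L(-1)^ic\equiv 0$ and says nothing about $D^{(i)}c$. So the congruence $D^{(i)}c\equiv\binom{-\deg c}{i}c\pmod{O_n(V)}$ that your reduction requires cannot come from the $L(-1)+L(0)$ generators; it has to be extracted from the other family, e.g.\ from $a\circ_{n,t}^{s}\1=\sum_{k}\binom{\deg a+n+s}{2n+1+t-k}D^{(k)}a\in O_n(V)$. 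Your sketch never does this, and in the final paragraph you explicitly defer it (``once the reduction \dots is established cleanly''), so the proof is not complete as written.

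The deferral is not cosmetic. For $n=0$ the relations $a\circ_{0,t}^{s}\1$ do determine every $D^{(k)}a$ modulo $O_0(V)$ by downward induction on the top mode, and the Zhu-type computation then closes. For $n\ge 1$, however, those relations have leading term $D^{(k)}a$ with $k\ge 2n+1$, so the divided powers $D^{(2)}a,\dots,D^{(2n)}a$ are not individually pinned down by the listed generators; one must instead show that the \emph{total} contribution of the $D^{(k)}$-terms ($k\ge1$) to the relevant residues lies in $O_n(V)$, which is a different (and nontrivial) argument from the term-by-term replacement you describe. Supplying that reduction is the actual content of adapting the complex-field proof to $\BF$; the remaining steps you outline (the change of variable $z\mapsto -z/(1+z)$, the Vandermonde collapse of the sum over $m$, and the derivation of (2) from (1)) are routine once it is in place.
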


 We use $o(a)$ for the operator $a_{\deg
a-1}$ on any weak $V$-module $M$ for a homogeneous $a\in V$,
and extend it to whole $V$ by linearly. It is clear from the definition of module
 that $o(a)M(n)\subset
M(n)$ for all
$n\in\BZ$ if $M$ is a $V$ module.
Here is our first main result which was obtained previously in \cite{DLM3} in the case that $\BF=\BC.$

\begin{thm}\label{the}
Let $V$ be a vertex operator algebra and $n$ a nonnegative integer.

(1) $A_n(V)$ is an associative algebra with product induced from $*_n$
and with identity $[1],$ central element $[\om].$

(2) If $n\geq m,$ $O_n(V)\subseteq O_m(V)$, and $A_m(V)$ is a quotient
of $A_n(V).$

(3) If $M$ is a weak $V$-module then
$$\Omega_n(M)=\{w\in M|u_iw=0, u\in V, i\geq \deg u+n\}$$
is an $A_n(V)$-module such that $[a]$ acts as $o(a)$ for homogeneous
$a\in V$.

(4) If $M=\oplus_{n\geq 0}M(n)$ is a  $V$-module, then $M(i)$ for each
$i=1,\ldots n$ is an $A_n(V)$-submodule of $\Omega_n(M).$
Moreover, if $M$ is irreducible then $\Omega_n(M)=\oplus_{i=0}^n M(i)$,
and $M(i)$ and $M(j)$ are
inequivalent simple $A_n(V)$-module if $i\neq j,$ and $M(i), M(j)$ are nonzero.

(5) If the operators $\frac{L(1)^i}{i!}$ make sense on $V$ for $i\geq
0,$ then the linear map
$$\phi:  a\mapsto e^{L(1)}(-1)^{L(0)}a$$
induces an anti-isomorphism $A_n(V)\to A_n(V)$.
\end{thm}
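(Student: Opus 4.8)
The plan is to establish the five parts essentially in order, following the strategy of \cite{DLM3} but paying close attention to the points where finite characteristic forces a change. Parts (1)--(3) are the technical core; once they are in place, (4) and (5) follow by the arguments sketched in the introduction together with Lemma \ref{lm} and Lemma \ref{l3.1}.

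For part (1), the plan is first to prove that $O_n(V)$ is a two-sided ideal of $(V,*_n)$, i.e. that $V*_nO_n(V)\subseteq O_n(V)$ and $O_n(V)*_nV\subseteq O_n(V)$. Concretely one must show that $a*_n(b\circ_{n,t}^sc)$ and $(b\circ_{n,t}^sc)*_na$ lie in $O_n(V)$, and likewise that $a*_n(L(-1)b+L(0)b)$ and $(L(-1)b+L(0)b)*_na$ lie in $O_n(V)$; the second of these is handled as in \cite{DLM3} using $\frac{d}{dz}Y(v,z)=Y(L(-1)v,z)$ and integration by parts under $\Res_z$. The first requires expanding $Y(a,z_1)Y(b,z_2)c$ via the iterate formula (the associativity relation in the Basics section) and reorganizing the double residue; the extra parameters $s\leq t$ in $\circ_{n,t}^s$ are exactly what make the ideal closed under multiplication in the modular case, where the single relation $\circ_n$ no longer suffices. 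Associativity of $*_n$ on $A_n(V)$, the fact that $[\1]$ is a two-sided identity, and the centrality of $[\om]$ are then proved by the same formal manipulations as in \cite{DLM3}: associativity reduces, modulo $O_n(V)$, to a binomial identity after applying the commutator formula $[u_s,v_t]=\sum_i\binom{s}{i}(u_iv)_{s+t-i}$, and these identities are characteristic-free because they are identities of binomial coefficients in $\BZ$.

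Part (2) is the observation that for $n\geq m$ one has $a\circ_{m,t}^sb = a\circ_{n,t'}^{s'}b$ for suitable shifted indices together with elements of the form coming from multiplying by extra powers of $(1+z)$, so $O_n(V)\subseteq O_m(V)$; since $*_n$ and $*_m$ agree modulo $O_m(V)$ (again a binomial-coefficient computation), $A_m(V)=V/O_m(V)$ is a quotient algebra of $A_n(V)$. For part (3), given a weak module $M$ and $w\in\Omega_n(M)$, one checks that $o(a)w\in\Omega_n(M)$, that $o(a)w=0$ whenever $a\in O_n(V)$, and that $o(a*_nb)w=o(a)o(b)w$ on $\Omega_n(M)$; all three are done by the associativity and commutator formulas for $Y_M$ exactly as in \cite{DLM3}, using the defining vanishing $u_iw=0$ for $i\geq\deg u+n$ to kill the unwanted terms. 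Part (4): the grading axiom $v_mM(n)\subseteq M(n+\deg v-m-1)$ gives $o(a)M(i)\subseteq M(i)$ and $M(i)\subseteq\Omega_n(M)$ for $i\leq n$, so each $M(i)$ is an $A_n(V)$-submodule; for $M$ irreducible, $\Omega_n(M)=\oplus_{i=0}^nM(i)$ follows from Lemma \ref{lm} (any nonzero $w$ generates $M$, and degree considerations confine $\Omega_n(M)$ to the bottom $n+1$ layers). Simplicity of each nonzero $M(i)$ as an $A_n(V)$-module comes from irreducibility of $M$ via the Verma-type construction $M(U)$ promised in the introduction. The genuinely new point — flagged by the author as the place where the complex-field proof breaks — is the \emph{inequivalence} of $M(i)$ and $M(j)$ for $i\neq j$: over $\BC$ one separates them by the $L(0)$-eigenvalue, but over $\BF$ of characteristic $p$ the eigenvalues on $M(i)$ and $M(i+pm)$ coincide. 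The plan is to distinguish them instead by a different central (or at least canonically acting) operator on $A_n(V)$, or by comparing which $A_{n-1}(V)$-module structure each carries and an induction on $n$; I expect this to be the main obstacle and the step requiring the most care. Part (5): assuming $\frac{L(1)^i}{i!}$ is defined on $V$, set $\phi(a)=e^{L(1)}(-1)^{L(0)}a$ and verify, by the same skew-symmetry computation as in \cite{DLM3}, that $\phi(a*_nb)\equiv\phi(b)*_n\phi(a)\pmod{O_n(V)}$ and $\phi(O_n(V))\subseteq O_n(V)$ (here Lemma \ref{l3.1} is the natural tool), that $\phi$ is an involution modulo $O_n(V)$, and conclude that it induces an anti-automorphism of $A_n(V)$.
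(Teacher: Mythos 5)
Your plan for parts (1), (2), (3) and (5) matches the paper's proof in both structure and substance: the core of (1) is indeed the verification that $a*_n(b\circ_{n,t}^sc)$ and $(b\circ_{n,t}^sc)*_na$ lie in $O_n(V)$ (the latter via Lemma \ref{l3.1}(2)), with the extra parameters $s\leq t$ present precisely so that the resulting terms $(a_{i+j}b)\circ_{n,n+m+1+i+t}^{i+n+1+s}c$ stay inside $O_n(V)$; (3) is the explicit expansion of $o(a\circ_{n,t}^sb)$ into modes $a_{\deg a+n+s+i}$ and $b_{\deg b+n-s+i+t}$ that kill $\Omega_n(M)$; and (5) is deferred to the computation in \cite{DJ}. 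Two small remarks: in (4) the paper proves simplicity of each $M(i)$ directly from Lemma \ref{lm} (every $M(j)$ is spanned by $a_{\deg a-1+i-j}w$ for a fixed nonzero $w\in M(i)$, so in particular $M(i)=o(V)w$), rather than routing through the Verma construction of Section 4, which has not yet been established at this point and itself depends on Theorem \ref{the}.

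The genuine gap is exactly where you predicted it would be: the inequivalence of $M(i)$ and $M(j)$ for $i\neq j$. You correctly identify that the $L(0)$-eigenvalue argument fails in characteristic $p$, but you then offer only two undeveloped alternatives ("a different central operator" or "induction on $n$ via the $A_{n-1}(V)$-structure"), neither of which is carried out and neither of which obviously closes. In particular, the induction idea reduces to showing that $M(j)$ does not factor through $A_{j-1}(V)$, which requires exhibiting an element of $O_{j-1}(V)$ acting nontrivially on $M(j)$ --- essentially the same difficulty in disguise. The paper's actual argument is concrete: take $j>i$, a nonzero $u\in M(j)$ and any $v\in M(i)$; choose $a$ with $a_{\deg a-1+j}u\neq 0$ in $M(0)$ (possible by the spanning result above), note $a_{\deg a-1+j}v=0$ for degree reasons, then choose $b$ with $b_{\deg b-1-j}a_{\deg a-1+j}u\neq 0$ in $M(j)$, and invoke the result of \cite{L2}, \cite{DM}, \cite{LL}, \cite{DR2} that on an irreducible module the composite $b_{\deg b-1-j}a_{\deg a-1+j}$ agrees with a single zero-mode $o(c)$ for some $c\in V$ depending only on $a,b,j$. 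Then $o(c)\neq 0$ on $M(j)$ while $o(c)=0$ on $M(i)$, separating the two $A_n(V)$-modules. Without this (or an equivalent) device, part (4) is not proved.
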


\begin{proof} (1) It is enough to  verify that $a*_n(b\circ_{n,t}^s c),
(b\circ_{n,t}^s c)*_na\in O_n(V)$ for $a,b,c\in V$ and $s,t\geq 0$ with $t\geq s$
as the rest of the proof is the same as in \cite{DLM3} in the case
$\BF=\BC.$

For homogeneous $a,b,c\in V,$
\begin{eqnarray*}
& &\ \ \ \ \ a*_n(b\circ_{n,t}^{s}c)\\
& & =\sum_{m=0}^n(-1)^m{m+n \choose n}{\rm Res}_{z_{1}}Y(a,z_{1})(b\circ_{n,t}^{s}c)\frac{(1+z_{1})^{\deg a+n}}{z_{1}^{n+m+1}}\\
& &\equiv\sum_{m=0}^n(-1)^m{m+n \choose n}{\rm Res}_{z_{1}}{\rm Res}_{z_{2}}\frac{(1+z_{1})^{\deg a+n}}{z_{1}^{n+m+1}}Y(a,z_{1})
\frac{(1+z_{2})^{\deg b+n+s}}{z_{2}^{2n+2+t}}Y(b,z_{2})c\\
& &\ \ \ -\sum_{m=0}^n(-1)^m{m+n \choose n}{\rm Res}_{z_{2}}{\rm Res}_{z_{1}}\frac{(1+z_{2})^{\deg b+n+s}}{z_{2}^{2n+2+t}}Y(b,z_{2})
\frac{(1+z_{1})^{\deg a+n}}{z_{1}^{n+m+1}}Y(a,z_{1})c\\
& &=\sum_{m=0}^n(-1)^m{m+n \choose n}{\rm Res}_{z_{0}}{\rm Res}_{z_{2}}
\frac{(1+z_{2}+z_{0})^{\deg a+n}}{(z_{2}+z_{0})^{n+m+1}}\frac{(1+z_{2})^{\deg b+n+s}}{(z_{2})^{2n+2+t}}
Y(Y(a,z_0)b,z_{2})c\\
& &=\sum_{m=0}^n(-1)^m{m+n \choose n}\sum_{j\geq 0}{\deg a+n\choose j}\sum_{i\geq0}{-n-m-1\choose i}
\\
& &\ \ \cdot {\rm Res}_{z_{0}}{\rm Res}_{z_{2}}\frac{(1+z_{2})^{\deg a+n-j+\deg b +n +s}}{z_{2}^{n+m+1+i+2n+2+t}}z_{0}^{i+j} Y(Y(a,z_0)b,z_{2})c\\
& &=\sum_{m=0}^n(-1)^m{m+n \choose n}\sum_{j\geq 0}{\deg a+n\choose j}\sum_{i\geq0}{-n-m-1\choose i}
(a_{i+j}b)\circ_{n,n+m+1+i+t}^{i+n+1+s}c\\
\end{eqnarray*}
which lies in $O_n(V)$ as $n+m+1+i+t\geq i+n+1+s$ for any $i\geq 0.$ By Lemma \ref{l3.1}, we have
\begin{eqnarray*}
& &\ \ \ \ a*_n(b\circ_{n,t}^{s}c)-(b\circ_{n,t}^{s}c)*_na\\
& &\equiv{\rm Res}_{z_1}Y(a,z_1)(b\circ_{n,t}^{s}c)(1+z_1)^{\deg a-1}\\
& &\equiv{\rm Res}_{z_{1}}{\rm Res}_{z_{2}}(1+z_{1})^{\deg a-1}
\frac{(1+z_{2})^{\deg b+n+s}}{z_{2}^{2n+2+t}}Y(a,z_{1})Y(b,z_{2})c\\
& &\ \ \  - {\rm Res}_{z_{2}}{\rm Res}_{z_{1}}
\frac{(1+z_{2})^{\deg b+n+s}}{z_{2}^{2n+2+t}}Y(b,z_{2})(1+z_{1})^{\deg a-1}Y(a,z_{1})c\\
& &={\rm Res}_{z_{0}}{\rm Res}_{z_{2}}(1+z_{2}+z_{0})^{\deg a-1}
\frac{(1+z_{2})^{\deg b+n+s}}{z_{2}^{2n+2+t}}Y(Y(a,z_{0})b,z_{2})c\\
& &=\sum\limits_{j\geq 0}{\deg a-1\choose j}{\rm Res}_{z_{0}}{\rm Res}_{z_{2}}z_{0}^j
\frac{(1+z_{2})^{\deg a+ \deg b +n+s-1-j}}{z_{2}^{2n+2+t}}Y(Y(a,z_{0})b,z_{2})c\\
& &=\sum\limits_{j\geq 0}{\deg a-1\choose j}(a_jb)\circ_{n,t}^sc
\end{eqnarray*}
is an element of $O_n(V).$ So $(b\circ_{n,t}^{s}c)*_na\in O_n(V).$

(2) From the definition,  $O_n(V)\subseteq O_m(V)$ when $n\geq m.$
The proof of $u*_nv=u*_{n-1}v$ modulo $O_{n-1}(V)$  in the case $\BF=\BC$ $\cite{DLM3}$ works here.

(3) Proving  that $\Omega_n(M)$ is an $A_n(V)$-module is equivalent to proving $o(a)=0$ on $\Omega_n(M)$ for $a\in O_n(V)$ and
$o(a*_nb)=o(a)o(b)$ for $a,b\in V.$
The proof of $o(a*_nb)=o(a)o(b)$  is the same as in \cite{DLM3}.
We now prove $o(a)=0$ for $a\in O_n(V).$ It is obvious that $o(L(-1)b+L(0)b)=0$ on $M$ for any $b\in V.$
The following computation gives an explicit expression of $o(a\circ_{n,t}^sb)$ on $\Omega_n(M):$
\begin{eqnarray*}
& &o(a\circ_{n,t}^sb) =o\left({\rm Res}_{z}\frac{(1+z)^{\deg a+n+s}}{z^{2n+t+2}}Y(a,z)b\right)\\
& &=\sum\limits_{j\geq 0}{ \deg a+n+s \choose j}(a_{j-2n-2-t}b)_{\deg a+\deg b-j+2n+t}\\
& &=\sum\limits_{j\geq 0}{\deg a+n+s\choose j}{\rm Res}_{z_{2}}{\rm Res}_{z_{1}-z_{2}}
\frac{z_{2}^{\deg a+\deg b-j+2n+t}}{(z_{1}-z_{2})^{2n+2+t-j}}Y(Y(a,z_{1}-z_{2})b,z_{2})\\
& &={\rm Res}_{z_{2}}{\rm Res}_{z_{1}-z_{2}}\frac{z_{1}^{\deg a+n+s}z_{2}^{\deg b+n-s+t}}{(z_{1}-z_{2})^{2n+2+t}}Y(Y(a,z_{1}-z_{2})b,z_{2})\\
& &={\rm Res}_{z_{1}}{\rm Res}_{z_{2}}\frac{z_{1}^{\deg a+n+s}z_{2}^{\deg b+n-s+t}}{(z_{1}-z_{2})^{2n+2+t}}Y(a,z_1)Y(b,z_2)\\
& &\ \ \ -{\rm Res}_{z_{2}}{\rm Res}_{z_{1}}\frac{z_{1}^{\deg a+n+s}z_{2}^{\deg b+n-s+t}}{(-z_{2}+z_{1})^{2n+2+t}}Y(b,z_2)Y(a,z_1)\\
& &=\sum_{i\geq 0}{-2n-t-2 \choose i}(-1)^ia_{\deg a+s-n-2-t+i}b_{\deg b+n-s+i+t}\\
& &\ \ \ -\sum_{i\geq 0}{-2n-t-2 \choose i}(-1)^{2n+2+t+i}b_{\deg b-n-s-2-i}a_{\deg a+n+s+i}.
\end{eqnarray*}
Since $a_{\deg a+n+s+i}=b_{\deg b+n-s+i+t}=0$ on $\Omega_n(M)$ for $t\geq s\geq 0$ and $i\geq 0,$
 $o(a\circ_{n,t}^s b)=0$ on $\Omega_n(M).$

(4) From the definitions of module and $\Omega_n(V),$ we see that  $M(i)\subset \Omega_n(M)$ if $i\leq n.$
We need to show that $\Omega_n(M)\cap M(i)=0$ if $i>n$, when $M$ is a simple $V$-module.
Assume that $\Omega_n(M)\cap M(i)$ is not zero for some $i>n.$ We
take a nonzero vector $w$ in $\Omega_n(M)\cap M(i).$
Then $M=\Span\{u_{\deg u+p}w\mid u\in V,p\in\Z, p<n\}$ by Lemma \ref{lm}.
This implies that $M(0)=0, $ a contradiction. So if $M$ is simple, $\Omega_n(M)=\oplus_{i=0}^nM(i).$

Let $M$ be an irreducible module. We now prove that each $M(i)$ is a simple $A_n(V)$-module for $i\leq n.$ Note that $M=\Span\{a_nw\mid a\in V, n\in \Z\}$, where $w$ is any fixed nonzero vector in $M(i).$
Note that  $a_nw\in M(\deg a-n-1+i)$ for any homogeneous $a\in V,$ we have $M(j)$ is spanned by $a_{\deg -1+i-j}w$ for $a\in V$ and $j\in\Z.$ In particular,  $M(i)$ is a simple $A_n(V)$-module.
The inequivalence of $M(i)$ and $M(j)$ in the case $\BF=0$ is trivial as $L(0)$ has different eigenvalues on $M(i)$ and $M(j).$ But for an arbitrary field
$\BF$ we have to find a different proof. Without loss, we can assume $j>i.$ Pick a nonzero  $u\in M(j)$ and any $v\in M(i).$ From the discussion above, there exists $a\in V$ such that $a_{\deg a-1+j}u\in M(0)$ is nonzero. Clearly, $a_{\deg a-1+j}v=0.$ Similarly there exists $b\in V$ such that $b_{\deg b-1-j}a_{\deg a-1+j}u\in M(j)$ is nonzero. By \cite{L2}, \cite{DM}, \cite{LL}, \cite{DR2} there exists $c\in V$ such that
$b_{\deg b-1-j}a_{\deg a-1+j}u=o(c)u$ and $b_{\deg b-1-j}a_{\deg a-1+j}v=o(c)v.$ Note from \cite{LL} that $c$ only depends on $b$ and $j.$ This shows that $o(c)\ne 0$ on $M(j)$ and $o(c)=0$ on $M(i).$ Thus
$M(i)$ and $M(j)$ are inequivalent $A_n(V)$-modules.

(5) We need to establish $\phi(a*_nb)=\phi(b)*_n\phi(a)$ modulo $O_n(V)$ and
$\phi(a\circ_{n,t}^sb)\in O_n(V)$ for any $a,b\in V$ and $t\geq s\geq 0.$
The proof is similar to that of Proposition 3.2 of \cite{DJ}.
\end{proof}

From Theorem \ref{the},  $\Omega_n/\Omega_{n-1}$ is a functor
from the completely reducible  $V$-module category to the completely reducible $A_n(V)$-module category whose irreducible components
can not factor through $A_{n-1}(V)$. The
restriction of $\Omega_n/\Omega_{n-1}$ sends the simple object to simple object.

\section{From $A_n(V)$-modules to $V$-modules}

We have discussed in Section 3 how to obtain $A_n(V)$-modules from $V$-modules. In this section we will go the opposite direction.
That is, we will construct a $V$-module
$M_n(U)=\oplus_{m\geq 0}M_n(U)(m)$ of Verma type for any $A_n(V)$-module $U$ which cannot factor through $A_{n-1}(V),$
such that $M_n(U)(0)\neq 0,$ and $M_n(U)(n)=U.$
(If it can factor through $A_{n-1}(V),$ we can consider the same procedure for $A_{n-1}(V).$)
As in the case that $\BF=\BC,$
$M_n(U)$ has a unique maximal submodule $W_n(U)$ and $L_n(U)=M_n(U)/W_n(U)$ is the smallest module such that $L_n(U)(0)\neq 0$ and $L_n(U)(n)=U.$
These results are then used to study the properties of $A_n(V)$ and the rationality of  $V$.

Notice from \cite{B1} that $\BF[t,t^{-1}]$ is a vertex algebra such
that $\1=1$ and $$Y(f(t),z)g(t)=(e^{z\frac{t}{dt}}f(t))g(t)$$
for $f,g\in \BF[t,t^{-1}].$
 Then we know $D^{(i)} f(t)=\frac{{(\frac{d}{dt})}^i}{i!}f(t),$ and ${\cal D}\BF[t,t^{-1}]=\sum_{n\ne-1}\BF t^n$  from \cite{DR1},
 where ${\cal D}=\sum_{i \geq0}D^{(i)}.$
The tensor product ${\cal L}(V)={\BF}[t,t^{-1}]\otimes V$  is a vertex algebra (cf. \cite{FHL}, \cite{L2}).
Let $\widehat V={\cal L}(V)/{\cal D}{\cal L}(V)$ be the corresponding Lie algebra such that for $a,b\in V$ and $p,q\in\Z,$ $[a(p), b(q)]=\sum_{i=0}^{\infty}{p\choose i}(a_ib)(p+q-i)$
where $a(p)$ is the image of $t^p\otimes a$ in $\widehat V.$

We define the degree of $a(m)$ to be $\deg a-m-1$ and let  $\widehat V_n$ be the degree $n$ subspace of $\widehat V.$ Then  $\widehat V=\bigoplus_{m\in\BZ}\widehat V_m$ is a $\BZ$-graded Lie algebra.
In particular, $\widehat{V}_0$ is a Lie subalgebra.
Recall Lemma \ref{lm}. As in \cite{DLM3} we have an epimorphism of Lie algebras from
$\widehat V_0$ to $A_n(V)_{Lie}$ by sending $a(\deg a-1)$ to $a+O_n(V)$
where $A_n(V)_{Lie}$ is the Lie algebra structure on $A_n(V)$ induced from the associative algebra structure.

We are ready to construct a $V$-module $M_n(U)$ from an $A_n(V)$-module $U$ which can not factor through $A_{n-1}(V).$
Then $U$ can be regarded as a module for $A_n(V)_{Lie}.$ From the discussion above, we
can make $U$ a module for $\widehat V_0.$ Note that $P_n=\oplus_{p>n}\widehat V_{-p}\oplus\widehat V_0$ is a subalgebra of  $\widehat V.$ We extend $U$ to a $P_n$-module by
letting $\widehat V_{-p}$
act trivially.
Consider induced module $\bar{M}_n(U)=\ind_{P_n}^{\hat V}(U)=U(\hat V)\otimes_{U(P_n)} U. $
If we give $U$ degree $n$, the $\Z$-gradation of $\hat V$ lifts to
$\bar{M}_n(U)$ which becomes a  $\Z$-graded module for $\hat V.$
It is easy to see that $\bar{M}_n(U)(i)=U(\hat V)_{i-n}U.$
We define for $v\in V,$
$$Y_{\bar{M}_n(U)}(v,z)=\sum_{m\in\Z}v(m)z^{-m-1}.$$
Then $Y_{\bar{M}_n(U)}(v,z)$ satisfies all conditions of a weak $V$-module except the associativity
which does not hold on $\bar{M}_n(U)$ in general \cite{DLM3}.

Motivated by the associativity relation, we let $W$ be the subspace of $\bar{M}_n(U)$ spanned linearly by the
coefficients of
\begin{eqnarray*}\label{g6.3}
(z_{0}+z_{2})^{{\deg}a+n}Y(a,z_{0}+z_{2})Y(b,z_{2})u-(z_{2}+z_{0})^{{\deg}a+n}
Y(Y(a,z_{0})b,z_{2})u
\end{eqnarray*}
for any homogeneous $a\in V,b\in V,$ $u\in U$ as in \cite{DLM3}.
Set
$$ M_n(U)=\bar{M}_n(U)/U(\hat V)W.$$

Let $U^*=\Hom_{\BF}(U,\BF)$ and let $U_s$ be the subspace
of $\bar{M}_n(U)(n)$ spanned by  ``length'' $s$ vectors
 $$o_{p_1}(a_1)\cdots o_{p_s}(a_s)U$$
where $p_1\geq \cdots \geq p_s,$ $p_1+\cdots p_s=0,$ $p_i\ne 0,$ $p_s\geq -n,$
$a_i\in V$ and $o_j(a)=a(\deg a-1-j)$ for homogeneous $a\in V.$
The  PBW theorem gives
$\bar{M}_n(U)(n)=\sum_{s\geq 0}U_s$ with $U_0=U$ and $U_s\cap U_t=0$ if $s\ne t.$

For homogeneous $u\in V,$ $v\in V$ and $m,q,p\in{\mathbb Z}_{+}$,
define the product $\ast_{m,p}^{q}$  on $V$ \cite{DJ} as follows
$$
u\ast_{m,p}^{q}v=\sum\limits_{i=0}^{p}(-1)^{i}{m+q-p+i\choose
i}{\rm Res}_{z}\frac{(1+z)^{\deg u+m}}{z^{m+q-p+i+1}}Y(u,z)v.
$$
Then we extend $U^*$ from $U$ to $M_n(U)(n)$ inductively so that
\begin{eqnarray*}\label{def}
\<u',o_{p_1}(a_1)\cdots o_{p_s}(a_s)u\>
=\<u',o_{p_{1}+p_2}(a_1\ast_{m,m+p_1}^{m+p_1+p_2}a_2)o_{p_3}(a_3)\cdots o_{p_{s}}(a_{s})u\>
\end{eqnarray*}
where $m=n+\sum_{i=3}^sp_i.$ We further extend
$U^*$ to $\bar{M}_n(U)$ by letting $U^*$ annihilate $\oplus_{i\ne n}\bar{M}_n(U)(i).$

Set $$ J=\{v\in M_n(U)|\langle u',xv\rangle=0\ {\rm for\ all}\ u'\in U^{*},\ {\rm all}\ x\in U(\hat V)\}.$$
Using the exact proof in \cite{DLM3} we have the second main result in this paper:

\begin{thm}\label{tm2} (1) $\ M_n(U)$ is a $V$-module with $ M_n(U)(0)\ne 0,$  generated by $M(U)(n)=U$ and satisfies the following universal property:
for any weak $V$-module $M$ and any $A_n(V)$-morphism $\phi: U\to \Omega_n(M),$ there is a unique
morphism $\bar\phi:  M_n(U)\to M$ of weak  $V$-modules which extends $\phi.$

(2) The $\bar M_n(U)$ has a unique maximal graded $\widehat V$-submodule $J$ with the property that $J\cap U=0.$
Then $L_n(U)=\bar M_n(U)/J$ is a $V$-module generated by $\Omega_n/\Omega_{n-1}(L_n(U))\cong U.$
Moreover, $U$ is simple if and only if $L_n(U)$ is irreducible,
and $U\mapsto L_n(U)$ gives a bijection between simple $A_n(V)$-modules which are not $A_{n-1}(V)$-modules and irreducible  $V$-modules.

(3) The $\bar J=J/U(\widehat V)W$ is the unique maximal
submodule of $M_n(U)$ such  that $\bar J\cap M_n(U)(n)=0$
and $L_n(U)=M_n(U)/\bar J.$
\end{thm}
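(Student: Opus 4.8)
I would follow \cite{DLM3} essentially verbatim, after noting that every ingredient that proof uses beyond the properties of $A_n(V)$ recorded in Theorem \ref{the} is insensitive to the characteristic of $\BF$; the single step of \cite{DLM3} that genuinely relied on $\BF=\BC$, namely separating the modules $M(i)$ by the eigenvalues of $L(0)$, has already been relocated to and reproved in Theorem \ref{the}(4). For part (1) I would first check that $W$ is a graded subspace of $\bar M_n(U)$, so that $M_n(U)=\bar M_n(U)/U(\hat V)W$ inherits a $\BZ$-gradation and the operators $Y_{M_n(U)}(v,z)=\sum_m v(m)z^{-m-1}$ satisfy all the weak-module axioms except possibly associativity. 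Since the relations cutting out $W$ are precisely the associativity relations at the critical exponent $\deg a+n$ applied to $U$, and since $u_m U=0$ for $m\ge\deg u+n$ (this is how $U$ was made a $P_n$-module), the standard ``generating subspace'' propagation argument of Li used in \cite{DLM3} --- move the relation off $U$ to all of $M_n(U)$ with the help of the commutator formula, which descends to the quotient --- shows that the associativity formula holds throughout $M_n(U)$, so $M_n(U)$ is a weak $V$-module.

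Next I would pin down the gradation, establishing $M_n(U)(m)=0$ for $m<0$, $M_n(U)(n)=U$, and $M_n(U)(0)\ne0$. The engine is the pairing with $U^{*}$: using $\bar M_n(U)(n)=\sum_{s\ge0}U_s$ from the PBW theorem, one extends each $u'\in U^{*}$ from $U=U_0$ to all of $\bar M_n(U)(n)$ by the stated inductive recipe built from the products $\ast_{m,p}^{q}$, the crucial point being the consistency of this recipe --- independence of the reduction of a vector to length-$0$ form --- which is exactly the computation of \cite{DLM3} and rests only on the associativity of $A_n(V)$ (Theorem \ref{the}(1)) together with identities among the modified products, all of which are characteristic-free. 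This consistency forces $U\hookrightarrow M_n(U)(n)$, and since associativity collapses every length-$s$ vector into $U$ modulo $U(\hat V)W$, one gets $M_n(U)(n)=U$; a parallel estimate kills the negative-degree part, and $M_n(U)(0)\ne0$ is obtained from the same circle of estimates, where one uses that $U$ is not an $A_{n-1}(V)$-module. The universal property is then formal: an $A_n(V)$-morphism $\phi\colon U\to\Omega_n(M)$ extends to a $\hat V$-morphism $\bar M_n(U)\to M$, and because $a_m\phi(u)=0$ for $m\ge\deg a+n$ the associativity relation holds at exponent $\deg a+n$ in $M$, so $W$ and hence $U(\hat V)W$ are annihilated and the map factors through $M_n(U)$; uniqueness is clear since $M_n(U)$ is generated by $U$.

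For part (2) I would put $J=\{v\in\bar M_n(U)\mid\langle u',xv\rangle=0\ \text{for all }u'\in U^{*},\,x\in U(\hat V)\}$. It is a graded $\hat V$-submodule by construction, it contains $U(\hat V)W$ (so its image $\bar J$ in $M_n(U)$ is defined), and $J\cap U=0$ since the pairing restricts to the canonical nondegenerate pairing on $U^{*}\times U$; moreover any graded submodule meeting $U$ trivially is annihilated, after projection to degree $n$, by $U^{*}\circ U(\hat V)$, hence lies in $J$, so $J$ is the unique maximal such submodule. Setting $L_n(U)=\bar M_n(U)/J$ one gets $L_n(U)(n)=U$, a $\BZ_{+}$-gradation with nonzero bottom, and $\Omega_n/\Omega_{n-1}(L_n(U))\cong U$. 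If $L_n(U)$ is irreducible then Theorem \ref{the}(4) makes $\Omega_n/\Omega_{n-1}(L_n(U))\cong U$ a simple $A_n(V)$-module. Conversely, if $U$ is simple and $0\ne N\subsetneq L_n(U)$ were a submodule, then $N\cap L_n(U)(n)$ is an $A_n(V)$-submodule of $U$, so it is $0$ or $U$; it cannot be $U$ (else $N$ contains the generating subspace $L_n(U)(n)$ and $N=L_n(U)$), and if it is $0$ then $N$ meets degree $n$ trivially, forcing $N\subseteq\bar J=0$ by maximality --- so $L_n(U)$ is irreducible. Hence $U\mapsto L_n(U)$ is injective because $U\cong\Omega_n/\Omega_{n-1}(L_n(U))$, and surjective because for an irreducible $V$-module $M$ with $M(n)\ne0$ the module $U=M(n)$ is a simple $A_n(V)$-module that is not an $A_{n-1}(V)$-module (by the remark following Theorem \ref{the}), and the inclusion $M(n)\hookrightarrow\Omega_n(M)$ yields via the universal property a surjection $M_n(U)\twoheadrightarrow M$ that is injective in degree $n$; its kernel therefore lies in $\bar J$, so $M$ is a quotient of $L_n(U)$ and, by irreducibility, $M\cong L_n(U)$.

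Part (3) is then bookkeeping: $\bar J=J/U(\hat V)W$ satisfies $\bar J\cap M_n(U)(n)=0$ (from $J\cap U=0$ and $M_n(U)(n)=U$), it is maximal with this property since any submodule of $M_n(U)$ meeting degree $n$ trivially pulls back to a submodule of $\bar M_n(U)$ meeting $U$ trivially and hence lies in $J$, and $M_n(U)/\bar J=\bar M_n(U)/J=L_n(U)$. The step I expect to be the main obstacle --- though it is computation-heavy rather than conceptually new --- is the consistency of the $U^{*}$-extension on $\bar M_n(U)(n)$: checking that the inductive formula built from $\ast_{m,p}^{q}$ is independent of how a vector is written as a product of the $o_{p}(a)$'s. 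This is where the associativity of $A_n(V)$ and the identities among the modified products must be combined carefully, and it is the heart of the construction; but every identity involved holds over an arbitrary field, so the verification is the one in \cite{DLM3} with no change.
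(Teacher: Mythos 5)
Your proposal is correct and follows exactly the route the paper takes: the paper's own proof of this theorem is simply the observation that the argument of \cite{DLM3} goes through verbatim, the only characteristic-dependent step (separating the $M(i)$ by $L(0)$-eigenvalues) having already been replaced in Theorem \ref{the}(4), and all remaining ingredients (the $\ast_{m,p}^{q}$ identities, the $U^{*}$-extension consistency, the pairing defining $J$) being characteristic-free. Your write-up is in fact more detailed than the paper's one-line citation, but it is the same proof.
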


It is clear that $M_n(U)$ is a Verma type $V$-module generated by $U$ and
$L_n(U)$ is the minimal $V$-module generated by $U.$

The next theorem which is an analogue of Theorem 4.10 of \cite{DLM3}
for rational vertex operator algebras.
\begin{thm}\label{t8.1} Suppose that $V$ is a rational vertex operator
algebra. Then the
following hold:

(1) $A_n(V)$ is a finite-dimensional, semisimple associative algebra for $n\geq 0.$

(2) If $M=\oplus_{n\geq 0}M(n)$ is an irreducible  $V$-module with $M(0)\ne 0,$
then $\dim M(m)<\infty$ for all $m.$

(3)  Let $M^i$ for $i=0,...,p$ be the inequivalent irreducible $V$-modules. Then
$$A_n(V)=\bigoplus_{i=0}^p\bigoplus_{m\leq n}\End M^i(m).$$
\end{thm}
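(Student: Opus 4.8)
The plan is to establish Theorem~\ref{t8.1} by combining the module-correspondence machinery of Theorems~\ref{the} and~\ref{tm2} with the semisimplicity hypothesis on $V$, following the route of \cite{DLM3} but replacing the one step that breaks over a field of positive characteristic. The three parts are interlocked: once (1) and (2) are in hand, (3) is essentially bookkeeping via the Artin--Wedderburn decomposition, so the real work is in (1) and (2).

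\medskip
\noindent\textbf{Finite-dimensionality and the key obstacle.} The first task is to show that each $A_n(V)$ is finite dimensional, and this is where the main obstacle lies. In \cite{DLM3} one argues that a rational $V$ has only finitely many irreducible modules, that each homogeneous piece $M(m)$ is finite dimensional, and that for $n$ large each irreducible module already has $M(n)\ne 0$; the last of these uses the operator $L(1)^m/m!$, which need not exist in characteristic $p$. So first I would invoke \cite{DR1}: rationality of $V$ gives a $V$-module $M=\oplus_m M(m)$ decomposing $V$ itself, and more usefully it gives that the regular representation, or rather the module $\bar M_n([1])$-type construction, is a $V$-module; applying Theorem~\ref{the}(3)--(4) and Theorem~\ref{tm2}, every simple $A_n(V)$-module which is not an $A_{n-1}(V)$-module is of the form $M(n)$ for an irreducible $V$-module $M$. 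The plan is then: (a) show $V$ has only finitely many inequivalent irreducible modules $M^0,\dots,M^p$ — this is the finite-length argument, using that $V$ as a module over itself has finite length because it is a semisimple object in a category where it is a quotient of the Verma-type module $M_0([1])$; (b) show each $M^i(m)$ is finite dimensional by downward induction on the \emph{difference} $m-n_0$, where $n_0$ is the smallest degree with $M^i(n_0)\ne 0$: since $M^i$ is generated by $M^i(n_0)$ (Lemma~\ref{lm}) and $M^i(n_0)$ is a simple finite-dimensional $A_{n_0}(V)$-module — here one needs $A_{n_0}(V)/O_{n_0+1}$-type finiteness, which itself follows once we know the \emph{bottom} algebras are finite dimensional — we propagate finiteness upward using $v_m M^i(n_0)\subseteq M^i(n_0+\deg v-m-1)$ together with finiteness of each $V_s$; (c) deduce $\dim A_n(V)\le \sum_{i=0}^p\sum_{m\le n}(\dim M^i(m))^2<\infty$ from the fact that the simple $A_n(V)$-modules are exactly the $M^i(m)$ with $m\le n$ (and no others, by the correspondence), so the semisimple quotient of $A_n(V)$ is $\bigoplus_{i,m\le n}\End M^i(m)$.

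\medskip
\noindent\textbf{Semisimplicity.} For the semisimplicity of $A_n(V)$, the plan is: given any $A_n(V)$-module $U$, decompose it as an $A_{n-1}(V)$-module-part plus a part on which $A_{n-1}(V)$ acts by the relevant ideal, reducing to the case where $U$ does not factor through $A_{n-1}(V)$; then form $L_n(U)$ by Theorem~\ref{tm2}(2). Since $V$ is rational, $L_n(U)$ is a direct sum of irreducible $V$-modules, and since $\Omega_n/\Omega_{n-1}$ is an exact functor (it is induced by a subquotient construction on a semisimple category) sending simple to simple, $\Omega_n/\Omega_{n-1}(L_n(U))\cong U$ must itself be semisimple. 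An induction on $n$ — the base $n=0$ being \cite{DR1} — then gives that \emph{every} $A_n(V)$-module is semisimple, i.e.\ $A_n(V)$ is a semisimple algebra. Combined with finite-dimensionality, Artin--Wedderburn and the complete list of simples gives part (3): $A_n(V)\cong\bigoplus_{i=0}^p\bigoplus_{m\le n}\End M^i(m)$, and part (2) falls out as a byproduct since each $\End M^i(m)$ is a finite-dimensional summand forces $\dim M^i(m)<\infty$.

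\medskip
\noindent The step I expect to be genuinely hard is \emph{(b)} above — proving $\dim M^i(m)<\infty$ without the operator $L(1)^m/m!$. The clean characteristic-zero argument that for large $n$ one has $M(n)\ne0$ (so that $M(n)$ is a simple $A_n(V)$-module and finiteness of $A_n(V)$ forces $\dim M(n)<\infty$, then one descends) relies on applying $L(1)$-type operators to move weight vectors downward; over $\BF$ of characteristic $p$ I would instead argue directly that the \emph{generating} top level $M^i(n_0)$ is a simple module for the finite-dimensional algebra $A_{n_0}(V)$ and hence finite dimensional, then use the PBW-type spanning set $M^i(n_0+k)=\Span\{a_{\deg a-1-k+\cdots}\, M^i(n_0)\}$ with the degree-filtration of $V$ to bound $\dim M^i(n_0+k)$ in terms of $\dim M^i(n_0)$ and $\dim V_s$ for $s\le k+\text{const}$. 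Making the bound finite at each fixed $k$ is the crux, and it is precisely the analogue of "$M(n)\ne0$ for $n$ large with a different proof" that the introduction promises; I would carry it out by a careful induction on $k$ showing that the image of the finite-dimensional space $\bigoplus_{s\le k+N}V_s\otimes M^i(n_0)$ already spans $M^i(n_0+k)$, using the commutator relation $[u_s,v_t]=\sum\binom{s}{i}(u_iv)_{s+t-i}$ to push higher-degree modes past lower ones.
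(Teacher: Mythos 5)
Your semisimplicity argument for (1) is exactly the paper's: induct on $n$, take the base case $n=0$ from \cite{DR1}, and for an $A_n(V)$-module $U$ that does not factor through $A_{n-1}(V)$ pass to $M_n(U)=L_n(U)$ with $M_n(U)(n)=U$; rationality makes $M_n(U)$ completely reducible and Theorem \ref{the}(4) then decomposes $U$. Part (3) via Artin--Wedderburn also matches. The problem is your step (b), the direct proof that $\dim M^i(m)<\infty$, which is both gappy and unnecessary.

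Concretely: you propose to show that $M^i(n_0+k)$ is already spanned by $a_{\deg a-1-k}M^i(n_0)$ with $\deg a$ bounded by $k+N$, using the commutator formula to ``push higher-degree modes past lower ones.'' But the spanning set coming from Lemma \ref{lm} ranges over $a\in V_s$ for \emph{all} $s$, and the relation $[u_s,v_t]=\sum_i\binom{s}{i}(u_iv)_{s+t-i}$ only reorders modes --- it replaces modes of $u$ and $v$ by modes of $u_iv$, whose degree is not smaller --- so normal-ordering a PBW monomial never bounds the degree of the vertex-algebra elements involved. There is no evident reason why $a_{s-1-k}M^i(n_0)$ for $s\gg 0$ should lie in the span of low-degree contributions; this is precisely the statement whose characteristic-zero proof goes through $e^{L(1)}$ and the contragredient module, which is unavailable here and which the whole paper is designed to circumvent. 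The paper reverses your logical order: it never proves $\dim M^i(m)<\infty$ directly. Rather, (2) is a corollary of (1): once every $A_n(V)$-module is completely reducible, $A_n(V)$ is a semisimple ring whose simple modules are exactly the $M^i(m)$ with $m\le n$ (finitely many, since \cite{DR1} already gives finitely many irreducible $V$-modules without any $L(1)$ hypothesis); each simple occurs in the finite decomposition of the regular module, the endomorphism division rings are $\BF$ by the Schur-type density argument over the algebraically closed field (\cite{DR2}, \cite{LL}), and Wedderburn yields $A_n(V)=\bigoplus_{i}\bigoplus_{m\le n}\End M^i(m)$ finite dimensional, whence each $M^i(m)$ is finite dimensional. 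Note also that your step (c) as written only controls the semisimple quotient of $A_n(V)$ before semisimplicity is established; in the paper's order of argument this issue does not arise. Keep your semisimplicity induction, delete step (b), and derive (2) from (1) and (3).
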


\begin{proof} (1) It is good enough to show that any $A_n(V)$-module $U$ is completely reducible. We prove by induction on $n.$ If $n=0$ the result
is true from \cite{DR1}. Let $U$ be an $A_n(V)$-module. If $U$ is also an $A_{n-1}(V)$-module, then $U$ is a completely reducible $A_{n-1}(V)$-module by induction assumption.
Otherwise $U$ is an $A_n(V)$-module which can not factor through $A_{n-1}(V).$ From Theorem \ref{tm2} we have a $V$-module $M_n(U)=L_n(U)$ such that $M_n(U)(n)=U.$
Since $M_n(U)$ is a completely reducible $V$-module, we immediately see that $U$ is a completely reducible $A_n(V)$-module.

(2) follows from (1) as $A_n(V)$ is finite dimensional.

(3) We have already known from \cite{DR1} that $V$ has only finitely many inequivalent irreducible $V$-modules.   By (1) and Theorem \ref{the} we know that $M^i(m)$ for $i=0,...,p$ and $m\leq n$ form a complete list of inequivalent simple $A_n(V)$-modules. By Artin-Wedderburn Theorem,
 $A_n(V)=\bigoplus_{i=0}^p\bigoplus_{m\leq n}\End M^i(m),$ as desired.
 \end{proof}

We remark  that Theorem \ref{t8.1} also holds if
$\BF$ is a finite field. This is because
the Artin-Wedderburn Theorem is valid for semisimple associative
algebras over finite fields.

In the case of complex field, one can get a stronger result \cite{DLM3}: if $A_n(V)$ is finite dimensional semisimple associative algebra for all $n\geq 0$ then $V$ is rational. The key observation in \cite{DLM3} is that
$M(n)\ne 0$ if $n$ is sufficiently large for any irreducible module $M.$
\begin{lem}\label{kl} Let $V$ be a simple vertex operator algebra and $M=\oplus_{n\geq 0}M(n)$ be a $V$-module with $M(0)\ne 0$ and $L(0)=\lambda$ on $M(0).$  Then  $M(n)$ is nonzero for sufficiently large $n.$
\end{lem}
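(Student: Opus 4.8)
The plan is to argue by contradiction using the vertex algebra structure directly, exploiting simplicity of $V$. Suppose $M(n) = 0$ for infinitely many $n$. I would first observe that, since $M$ is a $V$-module, the operators $v(m) = v_m$ with $\deg v - m - 1 = k$ shift degree by $k$; in particular, for homogeneous $v \in V$ of degree $d$, $o(v) = v_{d-1}$ preserves each $M(n)$, while $v_{d-1-k}$ maps $M(n)$ to $M(n+k)$. The key mechanism is that a degree-raising operator $v_{d-1-k}$ ($k>0$) followed by a degree-lowering operator $u_{e-1+k}$ ($k>0$, $u$ of degree $e$) is, by the commutator formula and Lemma~\ref{lm}-type spanning arguments (the results of \cite{L2}, \cite{DM}, \cite{LL}, \cite{DR2} invoked in the proof of Theorem~\ref{the}(4)), expressible through $o(c)$ for suitable $c \in V$, which acts within a single homogeneous piece.

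Next, I would pick the largest $N$ for which $M(N) \neq 0$ among a block preceding a gap — more precisely, suppose for contradiction $M(N)\ne 0$ but $M(N+j)=0$ for all $j$ in some long range, with infinitely many such gaps. Take $0 \neq w \in M(N)$. By the irreducibility/spanning lemma, $M = \Span\{v_m w : v \in V, m \in \Z\}$; since $M(0) \neq 0$, there is some $v \in V$ and $m$ with $v_m w \in M(0)$ nonzero, i.e. a degree-lowering operator takes $w$ down to the bottom. Conversely I want to produce nonzero vectors in arbitrarily high degree from $w$, which is where simplicity of $V$ enters: if $M(N+j) = 0$ for all $j > 0$, then $v_m w = 0$ whenever $v_m$ raises degree, so $w$ generates (under $V$) only $\oplus_{i \leq N} M(i)$, a proper nonzero submodule — contradicting nothing yet unless $M$ is irreducible. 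So the real content must be: even without assuming $M$ irreducible, the hypothesis $M(0)\ne 0$ together with simplicity of $V$ forces, via skew-symmetry $Y(u,z)v = e^{zD}Y(v,-z)u$ and the creation axiom, that some state in $M(0)$ can be "pushed up" — concretely, for $0 \neq w_0 \in M(0)$ and any $v \in V$, $Y_M(v,z)w_0$ has a well-defined lowest term, and one shows that as $v$ ranges over $V$ one gets nonzero vectors in infinitely many degrees, using that $V$ itself is $\Z$-graded with $V_k \neq 0$ for infinitely many $k$ (which holds since $\1 \in V_0$, $\om \in V_2$, and $V$ simple forces $V_k\ne 0$ for all large $k$, as $\om_{-k}\1 \ne 0$ lest the Virasoro action degenerate).

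Concretely, the cleanest route: let $0\ne w_0\in M(0)$. For homogeneous $v\in V_k$ with $k\ge 1$, consider $v_{-1}w_0 \in M(k)$. I claim these cannot all vanish for large $k$: if $v_{-1}w_0 = 0$ for all $v \in V_k$ and all large $k$, then using the commutator formula $[u_s, v_t]$ and associativity one shows the annihilator $\{v \in V : v_{-1}w_0 = 0\}$, suitably interpreted, would have to be an ideal-like object in $V$, and by simplicity of $V$ this is impossible unless it misses only finitely much of $V$. The argument that $\{v\in V: v_m w_0=0 \text{ for the appropriate }m\}$ behaves like an ideal comes from the iterate formula $(u_j v)_m = \sum_i \binom{j}{i}(-1)^i(u_{j-i}v_{m+i} - \cdots)$: if $v$ annihilates $w_0$ in the relevant sense, so does $u_j v$ for all $u,j$, and the span of all such $u_j v$ together with $D$-translates is a graded ideal of $V$.

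The main obstacle I anticipate is making the "ideal" argument precise in a way that survives finite characteristic: over $\BC$ one would typically invoke $L(0)$-eigenvalue bookkeeping or the operator $L(1)^m/m!$ to move between degrees, and the whole point of the paper is that these are unavailable here. So I expect the delicate step to be showing that the set of $v\in V$ for which $v_{-1}w_0 = 0$ (or the graded pieces of such annihilators) generates — under the operations $v \mapsto u_j v$ and $v \mapsto D^{(i)}v$ — a genuine two-sided ideal of $V$, and then concluding by simplicity that this ideal is $0$ or $V$; the case $V$ is ruled out by $\1_{-1}w_0 = w_0 \neq 0$ together with $Y_M(\1,z) = \mathrm{Id}$. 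Once that is in hand, nonvanishing of $V_k$ for all large $k$ (a standard consequence of $V$ being simple with $\om\in V_2$) yields $M(k) \supseteq V_k\cdot_{-1} w_0 \ne 0$ for all large $k$, completing the proof.
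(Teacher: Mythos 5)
There is a genuine gap, and it sits exactly where you flagged it. First, the negation of the conclusion is only that $M(n)=0$ for \emph{infinitely many} $n$, not for all large $n$, so you never reach a situation where $Y(v,z)w_0$ vanishes for $v$ in a fixed graded piece; at best you learn that certain individual modes $v_{\deg v-1-k}\,w_0$ vanish for the sporadic $k$ with $M(k)=0$. Second, and fatally, the ``annihilator is an ideal'' step does not work for a single mode: by the iterate formula, $(u_jv)_{-1}w_0$ is a combination of terms $u_{j-i}v_{-1+i}w_0$ and $v_{j-i-1}u_iw_0$ involving \emph{all} modes of $v$, so $v_{-1}w_0=0$ does not imply $(u_jv)_{-1}w_0=0$; the set $\{v\in V: v_{-1}w_0=0\}$ is not an ideal, and the two-sided ideal it \emph{generates} need not annihilate $w_0$ at all, so simplicity of $V$ yields no contradiction. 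Only the full annihilator $\{v: Y(v,z)w_0=0\}$ is an ideal (this is the content of Proposition 4.5.11 of \cite{LL} that the paper invokes), and your argument never produces a nonzero element of it. The auxiliary claim that $V_k\ne 0$ for all large $k$ for simple $V$ is also unjustified in finite characteristic (it would amount to $D^{(i)}\omega\ne 0$, precisely the kind of divided-power statement the paper is at pains to avoid), but even granting it the proof does not close.

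The missing idea is the paper's characteristic-$p$ Virasoro computation, which converts sporadic vanishing of homogeneous pieces into vanishing of \emph{all} negative Virasoro modes on $M(0)$. Writing $\ch\BF=p$ and $n=mp+r$, the single hypothesis $M(n)=0$ gives $L(-n)M(0)=0$, and bracketing with $L(s)$ for suitable small $s>0$ (which kill $M(0)$ for degree reasons) produces scalars such as $s$ or $2r$ that are nonzero mod $p$ --- this is where $\ch\BF\ne 2$ enters --- forcing $L(-t)M(0)=0$ for all $0<t\le mp$. Infinitely many vanishing $M(n_i)$ then give $L(-t)M(0)=0$ for all $t>0$, whence $2L(0)M(0)=L(1)L(-1)M(0)=0$ and so $Y(\omega,z)M(0)=0$; now $\omega$ lies in the full annihilator ideal, simplicity forces $Y(\1,z)M(0)=0$, contradicting $M(0)\ne 0$. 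Your instinct to finish with a simplicity/annihilator-ideal argument matches the paper's last step, but without the Virasoro propagation there is no nonzero vector whose entire vertex operator kills $M(0)$, and the argument cannot be completed along the lines you propose.
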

\begin{proof} Let $\ch \BF=p.$ Assume that $M(n)=0$ for some $n>0.$ Let $n=mp+r$ for some $m\geq 0$ and $r\in\{1,...,p-1\}.$ We claim that $L(-t)M(0)=0$ for all $0<t\leq mp.$ First we assume that $r=0.$ Then
$L(s)L(-n)M(0)=L(s)L(-mp)M(0)=sL(-mp+s)M(0)=0$ for $s=1,...,p-1.$ Also $L(1)L(-mp+p-1)M(0)=2L(-mp+p)M(0)=0.$ Continuing in this way gives the result. If $r>0$ then $L(r)L(-n)M(0)=2rL(-mp)M(0)=0.$
Since $2r\ne 0$ we see that $L(-pm)M(0)=0.$ Consequently, $L(-t)M(0)=0$ for all $0<t\leq mp.$

Assume that there are infinitely many $n_i$ such that $n_i<n_{i+1}$ for all $i$ and $M(n_i)=0.$  From the argument above we see that $L(-t)M(0)=0$ for all $t>0.$ Then $L(1)L(-1)M(0)=2L(0)M(0)=0.$ This forces $L(0)=0$
on $M(0).$ As a result, $Y(\omega, z)M(0)=0.$ Since $V$ is a simple vertex operator algebra, we conclude that $Y(u,z)M(0)=0$ for all $u\in V$ by using the exact argument give in Proposition 4.5.11 of \cite{LL}. In particular, $Y(\1,z)M(0)=0.$ This is a contradiction.  The proof is complete.
\end{proof}

The proof of Lemma \ref{kl} in the complex case \cite{DLM3} is easy as $L(-1)$ is an injective map from $M(n)$ to $M(n+1)$ if the kernel of $L(0)$ on $M(n)$ is $0.$

Finally we can have the following rationality result:
\begin{thm} Let $V$ be a simple vertex operator algebra. Then $V$ is rational if and only if $A_n(V)$ is a finite dimensional semisimple associative algebra for all $n\geq 0.$
\end{thm}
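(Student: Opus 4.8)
The plan is to prove the two directions separately, and for each direction to reduce to results already established in the excerpt. The forward direction---if $V$ is rational then $A_n(V)$ is finite dimensional semisimple for all $n$---is exactly part (1) of Theorem~\ref{t8.1}, so nothing new is needed there; I would simply cite it. The substance is the reverse direction: assuming $A_n(V)$ is a finite dimensional semisimple associative algebra for every $n\geq 0$, I must show that every $V$-module is completely reducible. This is where the hypothesis that $V$ is \emph{simple} gets used, through Lemma~\ref{kl}.

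First I would reduce the statement to the case of an irreducible $V$-module in the sense that it suffices to prove that for each irreducible $V$-module $M=\oplus_{m\geq 0}M(m)$ (with $M(0)\ne 0$), $M$ is obtained as $L_n(U)$ for a simple $A_n(V)$-module $U$, and then run a general ``sum of irreducibles / socle'' argument showing that an arbitrary $V$-module with the property that all its sub- and quotient-modules are well behaved is a direct sum of irreducibles; this is the same packaging as in \cite{DLM3}. The engine for ``well behaved'' is: take any $V$-module $M=\oplus_{m\ge 0}M(m)$ with $M(0)\ne 0$, pick $n$ large enough (here is where Lemma~\ref{kl} enters: for a simple $V$, $M(n)\ne 0$ once $n$ is large, so in particular we may choose $n$ with $M(n)\ne 0$), look at $M(n)$ as an $A_n(V)$-module via Theorem~\ref{the}(3), use semisimplicity of $A_n(V)$ to split $M(n)$ into simple $A_n(V)$-modules, and lift each simple summand $U_j$ (which cannot factor through $A_{n-1}(V)$ once we further arrange, by taking $n$ minimal with $M(n)\ne 0$ among a $p$-residue class, that it is genuinely of ``level $n$'') to the irreducible $V$-module $L_n(U_j)$ via Theorem~\ref{tm2}(2). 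The universal property in Theorem~\ref{tm2}(1), applied to the inclusion $M(n)\hookrightarrow \Omega_n(M)$, produces $V$-module maps $M_n(M(n))\to M$ whose images generate $M$, and semisimplicity of $A_n(V)$ together with Theorem~\ref{tm2}(2)--(3) forces $M$ to be a direct sum of the $L_n(U_j)$.

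The key point that makes the induction-free argument go through for all $V$-modules at once, rather than just for irreducibles, is precisely Lemma~\ref{kl}: it guarantees that no nonzero $V$-module can ``hide'' above level $n$ for all $n$, so that testing at a single sufficiently large level $n$ detects the whole module. Concretely, if $N\subseteq M$ were a nonzero submodule with $N\cap\bigoplus_{m\le n}M(m)=0$, then $N$ (after the standard grading shift so its bottom is in degree $0$) would be a $V$-module with $N(0)\ne 0$ all of whose homogeneous pieces vanish in degrees $0,1,\dots$, contradicting Lemma~\ref{kl}; hence the $A_n(V)$-module $\Omega_n(M)\supseteq \oplus_{m\le n}M(m)$ already ``sees'' every irreducible constituent of $M$. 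Combined with complete reducibility of the finite dimensional algebra $A_n(V)$, this yields that $M$ decomposes.

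I expect the main obstacle to be the bookkeeping around ``level'': a simple $A_n(V)$-module $U$ need not fail to factor through $A_{n-1}(V)$, and over a field of characteristic $p$ the naive trick of separating levels by $L(0)$-eigenvalue is unavailable (as the introduction stresses, $M(i)$ and $M(i+pm)$ carry the same $L(0)$-eigenvalue). The remedy is to invoke Theorem~\ref{the}(4)---the inequivalence of $M(i)$ and $M(j)$ for $i\ne j$ via the operator $o(c)$ constructed there---to identify, for each simple summand of $\Omega_n(M)$, the unique level $m\le n$ at which it genuinely lives, and then apply $L_m(\,\cdot\,)$ at that level. Once this matching is set up correctly, the decomposition $M=\bigoplus_j L_{m_j}(U_j)$ follows, every $V$-module is completely reducible, and $V$ is rational. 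The only other subtlety, already handled in the excerpt, is that Lemma~\ref{kl} genuinely needs $V$ simple, which is exactly the hypothesis of the theorem.
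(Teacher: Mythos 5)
Your forward direction is fine (it is exactly Theorem \ref{t8.1}(1)), and you correctly identify Lemma \ref{kl} and the semisimplicity of the algebras $A_n(V)$ as the two engines of the converse. But the way you deploy Lemma \ref{kl} is wrong, and this breaks your single-level strategy. Lemma \ref{kl} says that a nonzero module $N$ with $N(0)\ne 0$ has $N(m)\ne 0$ for all \emph{sufficiently large} $m$; it says nothing about small $m$. So a nonzero submodule $N\subseteq M$ with $N\cap\bigoplus_{m\le n}M(m)=0$ is not a contradiction: after the grading shift, $N$ is simply a module whose bottom sits in degree $n+1$ or higher of $M$, and Lemma \ref{kl} is perfectly consistent with that. (Indeed $M$ could be an infinite direct sum $\bigoplus_k N^k$ with $N^k$ supported in degrees $\ge k$; no single level $n$ ``sees'' all constituents, so no one choice of $n$ can detect the whole module.) Consequently the step ``semisimplicity of $A_n(V)$ at one large $n$ forces $M=\bigoplus_j L_{m_j}(U_j)$'' does not go through as stated.

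What Lemma \ref{kl} actually buys, and how the paper uses it, is in a sense the opposite: given a module $M$ generated by an irreducible $A(V)$-module $M(0)=U$ and a nonzero proper submodule $W$, applying Lemma \ref{kl} both to $W$ (suitably shifted) and to the irreducible quotient $L(U)=M/W$ produces a \emph{common} large $n$ with $W(n)\ne 0$ and $M(n)/W(n)\ne 0$; semisimplicity of $A_n(V)$ then splits $M(n)=X\oplus W(n)$, and the submodule generated by $X$ is shown (via Theorem \ref{tm2}) to contain $U$ and hence all of $M$, contradicting $W(n)\ne 0$. This establishes the key claim that a module generated by an irreducible $A(V)$-module on its top level is irreducible. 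The paper then peels an arbitrary module off level by level: $M^0$ is the (completely reducible) submodule generated by $M(0)$, one decomposes $M(1)=M^0(1)\oplus Y$ as $A_1(V)$-modules and must prove the nontrivial fact that $Y$ is actually an $A(V)$-module, i.e.\ that the submodule it generates does not reach down to $M(0)$; this uses the uniqueness, from Theorem \ref{tm2}(2), of the irreducible module with a prescribed irreducible top level. One then continues inductively through the levels. Your proposal contains neither the ``common level'' argument nor this layer-by-layer induction, and both are essential; the ``general socle packaging'' you gesture at is precisely the part that has to be built by hand here.
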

\begin{proof} By Theorem \ref{t8.1} we only need to show $V$ is rational if $A_n(V)$ are semisimple for all $n.$ So we need to prove any $V$-module $M$ is completely reducible. We first claim that
if $M=\oplus_{n\geq 0}M(n)$ is $V$-module generated by irreducible $A(V)$-module $M(0)=U$ then $M$ is irreducible.

Let $W$ be the maximal proper submodule of $M.$ If $W\ne 0$ then $W=\oplus_{n\geq m}W(n)$ for some $m>0$ and $W(m)\ne 0$ where $W(n)=W\cap M(n)$ for all $n.$ Recall that $L(U)=M/W$ is the irreducible $V$-module generated by $U.$ By Lemma \ref{kl} there is a large $n$ such that $W(n)\ne 0$ and
$L(U)(n)=M(n)/W(n)\ne 0.$ Since $A_n(V)$ is semisimple, $M(n)=X\oplus W(n)$ where $X$ is an $A_n(V)$-sumbodule isomorphic to $M(n)/W(n).$  Now let $P$ be the $V$-submodule generated by $X.$ Then
$P$ is a quotient of $M_n(X)$ by Theorem \ref{tm2} and $L_n(X)$ is the irreducible quotient of $P$ such that $L_n(X)(0)\ne 0.$ In particular, $P(n)=M_n(X)(n)=X.$ As both $P(0)$ and $L_n(X)(0)$ are irreducible $A(V)$-modules, we see that $P(0)=L_n(X)(0)=M(0)=U.$ Thus, $M$ is contained in $P$ as $M$ is generated by $U.$ This implies that $W(n)=0,$ a contradiction.

Now we prove any $V$-module $M=\oplus_{n\geq 0}M(n)$ with $M(0)\ne 0$ is completely reducible. Since  $M(0)$ is a direct sum of irreducible $A(V)$-modules,  the
$V$-submodule $M^0$ of $M$ generated by $M(0)$  is completely reducible. Decompose $M(1)=M^0(1)\oplus Y$ as $A_1(V)$-module. Then $Y$ is a completely reducible $A_1(V)$-module.

 Let $Z$ be an irreducible
$A_1(V)$-submodule of $Y.$ We claim that the $Z$ is an $A(V)$-module, or equivalently, $u_{\deg u+n}Z=0$ for all $u\in V$ and $n\geq 0.$ Assume that  $Z$ is not an $A(V)$-module. Let $Q$ be a $V$-module generated
by $Z.$ Then $Q\cap M(0)\ne 0$ and $Q(1)=Z$ where $Q(n)=M(n)\cap Q.$  From Theorem \ref{tm2}, $L_1(Z)$ is an irreducible $V$-module such that $L_1(Z)(1)=Z$ and $L_1(Z)(0)\ne 0.$ Clearly, $Q\cap M^0(1)=0.$ 
On the other hand, $L_1(Z)$ is a quotient of $Q$ and $U=L_1(Z)(0)\ne 0$ is an irreducible $A(V)$-submodule of $Q(0)=Q\cap M(0).$ Note that the $V$-submodule $Q^1$ generated by $U$ is contained in $M^0$ and  irreducible.
Also, $Q^1(1)\subset M^0(1)\cap Z=0.$ Since $L_1(Z)$ and $Q^1$ are inequivalent irreducible $V$-modules which have the same top level $U,$ we have a contradiction.  As a result, $Y$ is an $A(V)$-module. 

Let $M^1$ be the $V$-submodule of $M$ generated by $Y.$ Then $M^0\oplus M^1$ is a completely reducible submodule of $M.$ Continuing in this way shows
that $M$ is completely reducible, as desired.
\end{proof}



\end{document}